\documentclass[11pt,a4paper]{article}
\usepackage[T1]{fontenc}
\usepackage[utf8]{inputenc}
\usepackage{lmodern}
\usepackage{amsmath,amssymb,amsthm,mathtools}
\usepackage[margin=2.55cm]{geometry}
\usepackage{microtype}
\usepackage{enumitem}
\usepackage[colorlinks=true,linkcolor=black,citecolor=black,urlcolor=blue]{hyperref}
\usepackage[nameinlink,noabbrev]{cleveref}

\setlist{nosep}

\newtheorem{theorem}{Theorem}[section]
\newtheorem{lemma}[theorem]{Lemma}

\theoremstyle{remark}

\newcommand{\E}{\mathbb E}
\newcommand{\cL}{\mathcal L}
\newcommand{\cH}{\mathcal H}
\newcommand{\conv}{\operatorname{conv}}
\newcommand{\supp}{\operatorname{supp}}
\newcommand{\Span}{\operatorname{span}}
\newcommand{\norm}[1]{\left\|#1\right\|}
\newcommand{\ip}[2]{\left\langle #1,#2\right\rangle}
\newcommand{\signs}{\{-1,1\}}

\title{Large signed sums of unit vectors: the first linearly dependent case}
\author{Dami\'an Pinasco}
\date{}

\begin{document}
\maketitle

\begin{abstract}
We address a problem on large signed sums of unit vectors that arose in work of Brugger, Fiedler, Gonz\'alez Merino and Kirschbaum and was later formulated in its present form by Ambrus and Nietert. Given $d+1$ unit vectors $u_1,\ldots,u_{d+1}$ in $\mathbb R^d$, with $d\ge2$, the problem asks for the smallest possible value of
\[
 \max_{\varepsilon_i=\pm1}
 \left\|\sum_{i=1}^{d+1}\varepsilon_i u_i\right\|.
\]
We prove that this value is $\sqrt{d+2}$. We also determine all equality cases: up to independent sign changes and orthogonal transformations, they consist of the vertices of a centered regular simplex of positive even dimension together with an orthonormal basis of its orthogonal complement.
\end{abstract}

\medskip
\noindent\textit{2020 Mathematics Subject Classification.} Primary 52A40; Secondary 52A37.

\noindent\textit{Key words and phrases.} Signed sums of unit vectors, polarization, extremal vector systems, regular simplices.

\section{Introduction}

Given unit vectors $u_1,\ldots,u_n\in\mathbb R^d$, the elementary average
\[
 \frac{1}{2^n}\sum_{\varepsilon\in\signs^n}
 \left\|\sum_{i=1}^n\varepsilon_i u_i\right\|^2=n
\]
shows that some signed sum has norm at least $\sqrt n$. When $n\le d$, this bound is sharp, as witnessed by an orthonormal system. Thus $n=d+1$ is the first case in which linear dependence is forced, and the natural question is whether that dependence forces a larger signed sum.

Large signed sums may equivalently be viewed as an $\ell_1$-polarization problem, since the elementary identity
\[
 \max_{\varepsilon\in\signs^n}
 \left\|\sum_{i=1}^n\varepsilon_i u_i\right\|
 =\max_{v\in S^{d-1}}\sum_{i=1}^n |\ip{v}{u_i}|
\]
holds for every system of vectors $u_1,\ldots,u_n$. This point of view was used by Ambrus and Nietert to place the signed-sum problem in the framework of spherical polarization \cite{AmbrusNietert2019}. The same discrete formulation is also used in the subsequent study of minimax spherical designs by Fu, Wang and Yan \cite{FuWangYan2023}.

The $d+1$ case emerged from a more general selection problem considered by Brugger, Fiedler, Gonz\'alez Merino and Kirschbaum in connection with the behavior of the circumradius under Minkowski addition \cite{BruggerEtAl2018}. Specializing their Conjecture~4.2 to pairs $\{\pm u_i\}$ leads to the signed-sum problem considered here. After further study, including a communication from A. Polyanskii, the conjecture took the form relevant to the present paper: the sharp bound should be $\sqrt{d+2}$ for every $d\ge2$, and equality should occur precisely for an even-dimensional regular simplex together with orthogonal unit directions. Ambrus and Nietert subsequently stated this form explicitly as Conjecture~1 in the signed-sum setting \cite{AmbrusNietert2019}.

Ambrus and Gonz\'alez Merino later studied the broader signed subset-sum problem, in which one selects $k$ vectors among $n$ unit vectors and assigns signs to maximize the norm of their sum. For the $d+1$ problem considered here, their Theorem~6 proves the conjectured bound under the additional assumptions that $d$ is even and
\[
 \sum_{i=1}^{d+1}u_i=0
\]
\cite{AmbrusGonzalez2021}. A related centered problem has a zonotope formulation: Jo\'os and L\'angi minimize the circumradius, at fixed mean width, among zonotopes with $d+1$ generators in centered canonical form; see their Theorem~5 \cite{JoosLangi2023}. The case $d=2$ is already contained in the planar result of Brugger et al.\ \cite[Theorem~1.3]{BruggerEtAl2018}. The next dimension was settled by Fu, Wang and Yan: their Proposition~2.6 shows that the minimax spherical design for four vectors in $\mathbb R^3$ has value $\sqrt5$ and is of triangular-pyramid type, namely an equilateral triangle in a plane together with a perpendicular unit vector \cite{FuWangYan2023}. Thus their result gives the conjectured value $\sqrt5$ in dimension three, together with the predicted equality configuration.

More recently, Ambrus and Grundbacher have developed the large-signed-sum/polarization correspondence for general finite-dimensional Minkowski spaces \cite{AmbrusGrundbacher2026}. The present paper concerns the Euclidean $d+1$ problem and proves the conjecture for every $d\ge2$, including the complete equality classification.

We prove the following statement.

\begin{theorem}\label{thm:main}
Let $d\ge2$ and let $u_1,\ldots,u_{d+1}\in S^{d-1}$. Then
\[
 \max_{\varepsilon\in\signs^{d+1}}
 \left\|\sum_{i=1}^{d+1}\varepsilon_i u_i\right\|^2\ge d+2.
\]
Equality holds if and only if, after independent sign changes and a permutation of the vectors, there is a positive even integer $k\le d$ and an orthogonal decomposition
\[
 \mathbb R^d=E\oplus E^\perp,\qquad \dim E=k,
\]
such that $u_1,\ldots,u_{k+1}$ are the unit vertices of a centered regular simplex in $E$, while $u_{k+2},\ldots,u_{d+1}$ form an orthonormal basis of $E^\perp$.
\end{theorem}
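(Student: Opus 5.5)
We propose to prove the inequality by averaging $\|\sum_i\varepsilon_i u_i\|^2=(d+1)+2\sum_{i<j}\varepsilon_i\varepsilon_j\ip{u_i}{u_j}$ over a cleverly chosen probability measure on sign vectors, rather than the uniform one. Write $G=(\ip{u_i}{u_j})$ for the Gram matrix. It is positive semidefinite of rank at most $d$ and has unit diagonal. Hence there is a nonzero $a\in\mathbb R^{d+1}$ with $Ga=0$, i.e.\ $\sum_i a_iu_i=0$. Normalize $\sum_i|a_i|=1$ and, after sign changes of the $u_i$, assume $a_i\ge0$.

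The key step is to exploit this dependence. For the sign vector $\varepsilon^{(j)}$ with $\varepsilon_j=-1$ and all other entries $+1$, we have $\sum_i\varepsilon^{(j)}_iu_i=s-2u_j$, where $s=\sum_iu_i$. This gives
\[
\|s-2u_j\|^2=\|s\|^2-4\ip{s}{u_j}+4 .
\]
Averaging with weights $a_j$ yields $\|s\|^2-4\ip{s}{\sum_ja_ju_j}+4=\|s\|^2+4$. So the maximum is at least $\max(\|s\|^2,\|s\|^2+4)=\|s\|^2+4$. It remains to ensure $\|s\|^2\ge d-2$ for a suitable choice of the remaining signs. One cannot change signs freely, since that destroys $a\ge0$. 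Instead we use the freedom of coordinates with $a_i=0$, together with a second averaging.

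The plan is to combine the two measures. Take the uniform measure on $\{\pm1\}^{d+1}$, which gives mean $d+1$, mixed with a measure that tilts towards the flips above. Concretely, for $t\in[0,1]$ consider independent signs with $\E\varepsilon_i=t$, which gives
\[
\E\|\textstyle\sum\varepsilon_iu_i\|^2=(d+1)(1-t^2)+t^2\|s\|^2 .
\]
Then also use the flips, and optimize the convex combination. This produces a lower bound in terms of $\|s\|^2$ and the vector $a$.

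If this linear-programming style bound does not close, we fall back to an induction on $d$ via the support of $a$. Let $k+1=|\supp a|$. The vectors $u_i$ with $i\in\supp a$ are minimally dependent and span a $k$-dimensional space $E$. The others, after projection onto $E^\perp$, form $d-k$ vectors. We would then prove the sharp bound $k+2$ for a minimally dependent family of $k+1$ unit vectors in $\mathbb R^k$. For that family, maximize over signs and use the $\ell_1$-polarization identity with a test vector chosen from the simplex-dual structure. We then add the remaining vectors greedily: for each extra vector, choose its sign to make it nonnegatively correlated with the current sum. Each such vector then contributes at least the square of its norm, and its component orthogonal to $E$ contributes fully. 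Getting exactly $+1$ per extra vector requires their components orthogonal to everything previous. So we would instead use Gram–Schmidt ordering and the averaging identity applied to the projections.

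The main obstacle is the minimally dependent core: showing $\max_\varepsilon\|\sum\varepsilon_iu_i\|^2\ge k+2$ when $u_1,\dots,u_{k+1}$ span $\mathbb R^k$ with all $a_i>0$. Here we would average over the $k+1$ single-flip sign vectors and the all-plus vector with weights proportional to $(a_j)$ and a parameter. The identity $\sum a_ju_j=0$ then gives $\|s\|^2+4$ with $\|s\|^2$ controlled by $\|s\|^2\ge$ (Cauchy–Schwarz from $\sum_j a_j\ip{s}{u_j}=0$). Parity should enter because for odd $k$ the regular simplex admits a sign pattern with sum larger than $k+2$. Equality analysis then comes from tracking when each averaging step is tight: all flipped sums must have equal norm, which forces equal $a_j$, equal pairwise inner products $-1/k$, hence a regular simplex. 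The orthogonal part is forced by tightness of the greedy step.
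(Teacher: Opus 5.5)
\paragraph{Assessment.} There is a genuine gap: the core case (minimally dependent vectors) has no working argument. The measures you propose provably fail on the regular simplex.

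\paragraph{Where the argument breaks.} Your reduction to a minimally dependent core is sound and is essentially the paper's own reduction: choose a relation $a$ of minimal support, then adjoin the remaining vectors one at a time with a favorable sign. It is even easier than you suggest. Since $\max_{\delta=\pm1}\norm{S+\delta u}^2=\norm{S}^2+1+2|\ip{S}{u}|\ge\norm{S}^2+1$ for every unit $u$, no Gram--Schmidt ordering is needed.

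The gap is the core case itself. The single-flip average gives $\norm{s}^2+4$, the all-plus vector gives $\norm{s}^2$, and the product measure gives $(k+1)(1-t^2)+t^2\norm{s}^2$. An average is linear in the measure, so every mixture of these yields a convex combination of these numbers, hence at most $\max(k+1,\norm{s}^2+4)$. For the centered regular $k$-simplex, $a$ is constant and $s=0$. The best you can get there is $\max(k+1,4)<k+2$ once $k\ge3$, and for even $k$ this is exactly the extremal configuration. Nor can $\sum_j a_j\ip{s}{u_j}=0$ give any lower bound on $\norm{s}$, since $s=0$ occurs.

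Structurally, for an equality configuration any averaging proof must put all its mass on maximizing sign vectors. For the even-dimensional simplex these are the layers $\sum_i\varepsilon_i=\pm1$, far from the all-plus vector. Parity plays no role in the inequality, which holds for every $k$; it enters only in the equality analysis.

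\paragraph{The missing idea.} You need a configuration-dependent measure on the maximizers with controlled correlations. The paper obtains it at a global minimizer of $M$, which suffices because a lower bound for the minimum bounds every configuration. Working directly with $d+1$ vectors whose $d$-subfamilies are independent, it shows $0\in\conv\{q_\varepsilon:\varepsilon\in\cL\}$, where $q_\varepsilon=(\varepsilon_iP_iS_\varepsilon)_i$ collects the tangential gradients. Otherwise a single tangent perturbation would lower all maximal sums at once.

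The resulting probability on $\cL$ satisfies $\E(\varepsilon_iS_\varepsilon)=\lambda_iu_i$. Because the relation space is spanned by $a$, this forces $\E(\varepsilon_i\varepsilon_j)=-ra_ia_j$ for $i\ne j$, where $M=d+1+r$. Set $\rho_i=\varepsilon_i\ip{S_\varepsilon}{u_i}-1\ge0$ and $b_\varepsilon=\sum_ia_i\varepsilon_i$. Taking expectations in the pointwise inequality $r\sum_ia_i^2\rho_i-b_\varepsilon^2\ge0$ gives $(1-r)(1+r\sum_ia_i^4)\le0$, so $r\ge1$.

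\paragraph{Equality analysis.} Your equality sketch inherits the gap. Single-flip sums are not maximizers of the simplex once $k\ge3$, so tightness of a valid argument says nothing about them. Also, centering $\sum_iu_i=0$ alone does not give equal inner products. The paper gets:
\begin{itemize}
\item equal $a_i$ from the second relation $\sum_ia_i^3u_i=0$;
\item evenness from the pointwise vanishing of the deficit;
\item regularity from the uniform average over the layer $\sum_i\varepsilon_i=1$, followed by an exchange argument on $m$-subsets.
\end{itemize}

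In the non-core step, tightness only gives $\ip{S}{u_{d+1}}=0$ for the \emph{maximizing} block sums $S$. To conclude $u_{d+1}\perp E$ you also need that these sums span $E$ (the paper's lemma on the model configurations), and that the block spans a space of dimension $d-1$.
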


The proof combines induction with a rigidity argument for minimizing configurations. If some $d$ of the vectors are linearly dependent, the lower bound follows from the induction hypothesis. Otherwise, the space of linear relations among the $d+1$ vectors is one-dimensional. A convex balance condition for the maximizing sign vectors then yields explicit pairwise correlations, and a weighted variance identity gives the sharp bound. In the equality case, the same identity forces a second linear dependence with cubic coefficients, from which the original dependence is seen to be centered. A parity argument and an average over a suitable sign layer then force a regular simplex. The remaining equality cases are obtained inductively by adjoining orthogonal unit directions.

\section{The model equality configurations}\label{sec:models}

We first verify the configurations appearing in the equality statement.

Let $v_1,\ldots,v_{k+1}$ be the unit vertices of a centered regular simplex in a $k$-dimensional Euclidean space. Thus
\[
 \sum_{i=1}^{k+1}v_i=0,
 \qquad
 \ip{v_i}{v_j}=-\frac1k\quad(i\ne j).
\]
For every $\varepsilon\in\signs^{k+1}$,
\begin{equation}
 \left\|\sum_{i=1}^{k+1}\varepsilon_i v_i\right\|^2
 =(k+1)-\frac{2}{k}\sum_{i<j}\varepsilon_i\varepsilon_j
 =\frac{(k+1)^2-\left(\sum_{i=1}^{k+1}\varepsilon_i\right)^2}{k}.
 \label{eq:simplexformula}
\end{equation}
If $k$ is even, then $k+1$ is odd, so the minimum possible value of
$\left|\sum_{i=1}^{k+1}\varepsilon_i\right|$ is $1$. Hence
\[
 \max_{\varepsilon}
 \left\|\sum_{i=1}^{k+1}\varepsilon_i v_i\right\|^2=k+2,
\]
and the maximizing sign vectors are exactly those for which
$\left|\sum_{i=1}^{k+1}\varepsilon_i\right|=1$.

Now let $e_1,\ldots,e_{d-k}$ be an orthonormal basis of a space orthogonal to the simplex. Every signed sum in the orthonormal block has squared norm $d-k$, independently of the signs. Therefore the direct sum configuration has maximum squared norm
\[
 (k+2)+(d-k)=d+2.
\]
Thus every configuration listed in \cref{thm:main} is an equality configuration.

\section{The sharp lower bound}\label{sec:lower}

For a fixed system $u_1,\ldots,u_{d+1}$ write
\[
 S_\varepsilon:=\sum_{i=1}^{d+1}\varepsilon_i u_i,
 \qquad
 M:=\max_{\varepsilon\in\signs^{d+1}}\norm{S_\varepsilon}^2.
\]
We prove the lower bound by induction on $d$. The one-dimensional estimate needed as the base case is immediate: two unit vectors in $\mathbb R$ admit a signed sum of norm $2$, so $M=4\ge3$.

Suppose first that some $d$ of the vectors are linearly dependent; after reordering, take $u_1,\ldots,u_d$. Their span has dimension at most $d-1$, so the induction hypothesis, applied after an isometric embedding into $\mathbb R^{d-1}$, gives signs $\varepsilon_1,\ldots,\varepsilon_d$ such that
\[
 S:=\sum_{i=1}^d\varepsilon_i u_i,
 \qquad \norm S^2\ge d+1.
\]
Choosing the sign of $u_{d+1}$ favorably,
\[
 \max_{\delta=\pm1}\norm{S+\delta u_{d+1}}^2
 =\norm S^2+1+2|\ip{S}{u_{d+1}}|
 \ge d+2.
\]
Hence only the case in which every $d$-subfamily is linearly independent requires further work.

Since there are $d+1$ vectors in $\mathbb R^d$ and every $d$-subfamily is independent, the space of linear relations among them is one-dimensional. After independent sign changes and a normalization we may write
\begin{equation}\label{eq:dependence}
 \sum_{i=1}^{d+1}a_i u_i=0,
 \qquad a_i>0,
 \qquad \sum_{i=1}^{d+1}a_i^2=1.
\end{equation}
All $a_i$ are nonzero, and every linear relation among the $u_i$ is a scalar multiple of \eqref{eq:dependence}.

We now consider a minimizing configuration. Related stationarity conditions obtained from tangent perturbations appear in Fu, Wang and Yan \cite[Lemma~2.5]{FuWangYan2023}; we use the following convex form. Consider $M$ as a continuous function on $(S^{d-1})^{d+1}$ and choose a configuration at which it attains its minimum. If some $d$ vectors of this minimizing configuration are linearly dependent, the preceding argument already gives $M\ge d+2$. We may therefore assume that every $d$-subfamily is linearly independent, and hence that the minimizing configuration satisfies \eqref{eq:dependence}. Write
\begin{equation}\label{eq:rdefpaper}
 M=d+1+r.
\end{equation}
The uniform average over all sign vectors is $d+1$, so $r\ge0$.

Let
\[
 \cL:=\{\varepsilon\in\signs^{d+1}:\norm{S_\varepsilon}^2=M\}
\]
be the set of maximizing sign vectors. For $h_i\perp u_i$, consider the normalized curves
\[
 u_i(t)=\frac{u_i+t h_i}{\norm{u_i+t h_i}}.
\]
Since $u_i'(0)=h_i$, differentiation gives
\[
 \frac{d}{dt}\bigg|_{t=0}
 \left\|\sum_{i=1}^{d+1}\varepsilon_i u_i(t)\right\|^2
 =2\sum_{i=1}^{d+1}\varepsilon_i\ip{S_\varepsilon}{h_i}.
\]
Let $P_i$ denote the orthogonal projection onto $u_i^\perp$ and set
\[
 q_\varepsilon
 :=(\varepsilon_1P_1S_\varepsilon,\ldots,
    \varepsilon_{d+1}P_{d+1}S_\varepsilon)
 \in H:=\bigoplus_{i=1}^{d+1}u_i^\perp.
\]
Set
\[
 C:=\conv\{q_\varepsilon:\varepsilon\in\cL\}.
\]
We claim that
\begin{equation}\label{eq:convexbalancepaper}
 0\in C.
\end{equation}
Suppose, to the contrary, that $0\notin C$, and let $q_0\in C$ be a point of minimal norm. Since $C$ is compact and convex, such a point exists. For every $q\in C$ and $0<s<1$,
\[
 q_0+s(q-q_0)\in C,
\]
and hence the minimality of $q_0$ gives
\[
 \norm{q_0+s(q-q_0)}^2\ge \norm{q_0}^2.
\]
After expanding, dividing by $s$ and letting $s\downarrow0$, we obtain
\[
 \ip{q_0}{q}\ge \norm{q_0}^2
 \qquad(q\in C).
\]
Take $h=-q_0\in H$. For every maximizing sign vector $\varepsilon\in\cL$, the derivative at $t=0$ of $\norm{\sum_{i=1}^{d+1}\varepsilon_i u_i(t)}^2$ is then at most $-2\norm{q_0}^2<0$. Since $\cL$ is finite, all the corresponding squared norms are below $M$ for the same sufficiently small positive $t$. For every remaining sign vector $\varepsilon\notin\cL$ we have $\norm{S_\varepsilon}^2<M$; by finiteness and continuity, these strict inequalities persist for all sufficiently small $t$. This contradicts the minimality of the configuration and proves \eqref{eq:convexbalancepaper}.

Thus there are numbers $p_\varepsilon\ge0$, $\varepsilon\in\cL$, such that
\[
 \sum_{\varepsilon\in\cL}p_\varepsilon=1,
 \qquad
 \sum_{\varepsilon\in\cL}p_\varepsilon q_\varepsilon=0.
\]
Such a representation need not be unique. We choose one and fix it throughout the argument, and write $\E$ for expectation with respect to the corresponding probability distribution on $\cL$. Since
\[
 \E q_\varepsilon=0\qquad\text{in }H,
\]
the $i$th coordinate gives
\[
 \E(\varepsilon_i P_iS_\varepsilon)=0.
\]
Equivalently,
\[
 P_i\,\E(\varepsilon_iS_\varepsilon)=0.
\]
Thus $\E(\varepsilon_iS_\varepsilon)$ is parallel to $u_i$, and hence, for some scalar $\lambda_i$,
\begin{equation}\label{eq:balancepaper}
 \E(\varepsilon_iS_\varepsilon)=\lambda_i u_i,
 \qquad 1\le i\le d+1.
\end{equation}

We next record the pairwise correlations forced by the balance condition. Set
\[
 c_{ij}:=\E(\varepsilon_i\varepsilon_j).
\]
Expanding \eqref{eq:balancepaper},
\[
 \sum_{j=1}^{d+1}(c_{ij}-\lambda_i\delta_{ij})u_j=0.
\]
The space of linear relations among the $u_j$ is one-dimensional, generated by \eqref{eq:dependence}. Hence, for each $i$, there is a scalar $\mu_i$ such that
\[
 c_{ij}-\lambda_i\delta_{ij}=\mu_i a_j
 \qquad(1\le j\le d+1).
\]
For $i\ne j$, symmetry of $(c_{ij})$ gives $\mu_i a_j=\mu_j a_i$. Since every $a_i$ is nonzero, there is a scalar $\kappa$ such that $\mu_i=\kappa a_i$ for all $i$. Therefore
\[
 c_{ij}=\lambda_i\delta_{ij}+\kappa a_i a_j.
\]
Since $c_{ii}=1$,
\[
 \lambda_i=1-\kappa a_i^2.
\]
On the other hand, the distribution is supported on maximizing sign vectors, so
\[
 d+1+r
 =\E\norm{S_\varepsilon}^2
 =\sum_{i=1}^{d+1}\ip{\E(\varepsilon_iS_\varepsilon)}{u_i}
 =\sum_{i=1}^{d+1}\lambda_i
 =d+1-\kappa.
\]
Therefore $\kappa=-r$, and
\begin{equation}\label{eq:correlationspaper}
 c_{ij}=-r a_i a_j\quad(i\ne j),
 \qquad
 \lambda_i=1+r a_i^2.
\end{equation}

For $\varepsilon\in\cL$, let $\varepsilon^{(i)}$ be obtained from $\varepsilon$ by replacing $\varepsilon_i$ with $-\varepsilon_i$, and define
\[
 \rho_i(\varepsilon):=\varepsilon_i\ip{S_\varepsilon}{u_i}-1.
\]
Then
\[
 \norm{S_{\varepsilon^{(i)}}}^2
 =\norm{S_\varepsilon}^2-4\rho_i(\varepsilon)
 =M-4\rho_i(\varepsilon).
\]
Since $\varepsilon$ is maximizing, $\rho_i(\varepsilon)\ge0$. This elementary observation is closely related to Bang's lemma; see \cite{Bang1951} and \cite[Section~4]{AmbrusNietert2019}.
Summing over $i$ and using $\norm{S_\varepsilon}^2=M=d+1+r$, while \eqref{eq:balancepaper} and \eqref{eq:correlationspaper} give the expectation, we obtain
\begin{equation}\label{eq:rhosumpaper}
 \sum_{i=1}^{d+1}\rho_i(\varepsilon)
 =\norm{S_\varepsilon}^2-(d+1)=r,
 \qquad
 \E\rho_i(\varepsilon)
 =\lambda_i-1
 =r a_i^2.
\end{equation}
Taking the inner product of \eqref{eq:dependence} with $S_\varepsilon$ and writing
\[
 b_\varepsilon:=\sum_{i=1}^{d+1} a_i\varepsilon_i,
\]
we obtain
\begin{equation}\label{eq:bidentitypaper}
 b_\varepsilon=-\sum_{i=1}^{d+1} a_i\varepsilon_i\rho_i(\varepsilon).
\end{equation}
A direct expansion gives
\begin{align}
 r\sum_{i=1}^{d+1} a_i^2\rho_i(\varepsilon)-b_\varepsilon^2
 &=\frac12\sum_{i=1}^{d+1}\sum_{j=1}^{d+1}\rho_i(\varepsilon)\rho_j(\varepsilon)
   (a_i\varepsilon_i-a_j\varepsilon_j)^2
 \ge0.
 \label{eq:deficitpaper}
\end{align}
Taking expectations in \eqref{eq:deficitpaper}, the first term is
\[
 \E\left[r\sum_{i=1}^{d+1} a_i^2\rho_i(\varepsilon)\right]
 =r^2\sum_{i=1}^{d+1} a_i^4.
\]
Using \eqref{eq:correlationspaper},
\begin{align*}
 \E b_\varepsilon^2
 &=\sum_{i=1}^{d+1}a_i^2+2\sum_{1\le i<j\le d+1}a_i a_jc_{ij}\\
 &=1-r+r\sum_{i=1}^{d+1}a_i^4.
\end{align*}
Consequently
\[
 1-r+r\sum_{i=1}^{d+1} a_i^4\le r^2\sum_{i=1}^{d+1} a_i^4,
\]
or
\[
 (1-r)\left(1+r\sum_{i=1}^{d+1} a_i^4\right)\le0.
\]
Since $r\ge0$, the second factor is positive. Thus $r\ge1$, and \eqref{eq:rdefpaper} gives $M\ge d+2$. The induction is complete.

\section{Equality when every \texorpdfstring{$d$}{d}-subfamily is independent}\label{sec:uniqueeq}

Assume now that $M=d+2$ and that every $d$-subfamily is linearly independent. By \cref{sec:lower}, such a configuration is a global minimizer, and all the preceding identities apply with $r=1$.

We first extract the additional dependence forced by equality. At $r=1$, the expectation of the nonnegative quantity in \eqref{eq:deficitpaper} is zero. Hence it vanishes for every $\varepsilon\in\supp p$. Using \eqref{eq:rhosumpaper} and \eqref{eq:bidentitypaper},

\begin{align*}
 \sum_{i=1}^{d+1}\rho_i(\varepsilon)(a_i\varepsilon_i+b_\varepsilon)^2
 &=\sum_{i=1}^{d+1} a_i^2\rho_i(\varepsilon)
   +2b_\varepsilon\sum_{i=1}^{d+1} a_i\varepsilon_i\rho_i(\varepsilon)
   +b_\varepsilon^2\sum_{i=1}^{d+1}\rho_i(\varepsilon)\\
 &=\sum_{i=1}^{d+1} a_i^2\rho_i(\varepsilon)-b_\varepsilon^2.
\end{align*}
Thus the vanishing of the deficit may be written as
\begin{equation}\label{eq:deficitr1}
 \sum_{i=1}^{d+1}\rho_i(\varepsilon)(a_i\varepsilon_i+b_\varepsilon)^2=0
 \qquad(\varepsilon\in\supp p).
\end{equation}
Hence
\begin{equation}\label{eq:pointequalitypaper}
 \rho_i(\varepsilon)(a_i\varepsilon_i+b_\varepsilon)=0
 \qquad
 (\varepsilon\in\supp p,\ 1\le i\le d+1).
\end{equation}

Define
\[
 W:=\E(b_\varepsilon S_\varepsilon).
\]
Since $c_{ij}=-a_i a_j$ for $i\ne j$,
\begin{align*}
 \E(b_\varepsilon\varepsilon_i)
 &=a_i+\sum_{\substack{j=1\\j\ne i}}^{d+1}a_jc_{ij}
 =a_i-a_i\sum_{\substack{j=1\\j\ne i}}^{d+1}a_j^2
 =a_i^3.
\end{align*}
Thus
\begin{equation}\label{eq:Wcubic}
 W=\sum_{i=1}^{d+1} a_i^3u_i.
\end{equation}
On the other hand, \eqref{eq:pointequalitypaper} gives
$b_\varepsilon\varepsilon_i\rho_i(\varepsilon)=-a_i\rho_i(\varepsilon)$ on $\supp p$. Since $\E\rho_i(\varepsilon)=a_i^2$,
\begin{align*}
 \ip{W}{u_i}
 &=\E\bigl[b_\varepsilon\varepsilon_i(1+\rho_i(\varepsilon))\bigr]\\
 &=a_i^3-a_i\E\rho_i(\varepsilon)=0.
\end{align*}
The $u_i$ span $\mathbb R^d$, hence $W=0$. By \eqref{eq:Wcubic},
\[
 \sum_{i=1}^{d+1} a_i^3u_i=0.
\]
The relation space is one-dimensional, so $(a_i^3)_i$ is proportional to $(a_i)_i$. Because all $a_i$ are positive, all $a_i^2$ are equal, and the normalization in \eqref{eq:dependence} gives
\begin{equation}\label{eq:equalapaper}
 a_i=\frac1{\sqrt{d+1}}
 \qquad\text{for all }i.
\end{equation}
Returning to the relation \eqref{eq:dependence}, we obtain
\begin{equation}\label{eq:centeredpaper}
 \sum_{i=1}^{d+1}u_i=0.
\end{equation}

The same pointwise equality has a parity consequence. For every $\varepsilon\in\supp p$, \eqref{eq:rhosumpaper} gives $\sum_{i=1}^{d+1}\rho_i(\varepsilon)=1$, so some $\rho_i(\varepsilon)$ is positive. For that index, \eqref{eq:pointequalitypaper} and \eqref{eq:equalapaper} give
\[
 \sum_{j=1}^{d+1}\varepsilon_j=-\varepsilon_i\in\{-1,1\}.
\]
The sum of $d+1$ signs has the same parity as $d+1$. Hence $d+1$ is odd and
\begin{equation}\label{eq:devenpaper}
 d\ \text{is even}.
\end{equation}

Write $d=2m$. We now use the centered relation \eqref{eq:centeredpaper}. Consider the sign layer
\[
 \cH:=\left\{\varepsilon\in\signs^{d+1}:\sum_{i=1}^{d+1}\varepsilon_i=1\right\}.
\]
Let $\E_{\cH}$ denote expectation with respect to the uniform distribution on $\cH$. Related layer averages occur in the centered zonotope argument of Jo\'os and L\'angi \cite[Theorem~5]{JoosLangi2023}. Permutation invariance gives a common value $\alpha$ for
$\E_{\cH}(\varepsilon_i\varepsilon_j)$ when $i\ne j$. Since $\sum_{i=1}^{d+1}\varepsilon_i=1$ on $\cH$,
\[
 1
 =\E_{\cH}\left(\sum_{i=1}^{d+1}\varepsilon_i\right)^2
 =(d+1)+d(d+1)\alpha,
\]
so
\begin{equation}\label{eq:layercorrpaper}
 \alpha=-\frac1{d+1}.
\end{equation}
Let $g_{ij}=\ip{u_i}{u_j}$. From \eqref{eq:centeredpaper} we have
\begin{equation}\label{eq:gramtotalpaper}
 \sum_{i=1}^{d+1}\sum_{\substack{j=1\\j\ne i}}^{d+1}g_{ij}=-(d+1).
\end{equation}
Therefore
\begin{align*}
 \E_{\cH}\norm{S_\varepsilon}^2
 &=(d+1)
   -\frac1{d+1}\sum_{i=1}^{d+1}\sum_{\substack{j=1\\j\ne i}}^{d+1}g_{ij}\\
 &=d+2.
\end{align*}
Since $M=d+2$, every term in this average is at most $d+2$. As the average itself equals $d+2$, equality holds throughout. Consequently,
\begin{equation}\label{eq:alllayermaxpaper}
 \norm{S_\varepsilon}^2=d+2
 \qquad\text{for every }\varepsilon\in\cH.
\end{equation}

It remains to identify the centered configuration. A sign vector in $\cH$ has exactly $m$ negative signs. If
$A=\{i:\varepsilon_i=-1\}$, then $|A|=m$, and using \eqref{eq:centeredpaper},
\[
 S_\varepsilon=-2\sum_{i\in A}u_i.
\]
Thus \eqref{eq:alllayermaxpaper} says that
\begin{equation}\label{eq:msubsets}
 \left\|\sum_{i\in A}u_i\right\|^2=\frac{m+1}{2}
 \qquad\text{for every }A\subset\{1,\ldots,2m+1\},\ |A|=m.
\end{equation}

For $m=1$, \eqref{eq:equalapaper} already says that three unit vectors sum to zero, hence they form a regular triangle. Assume $m\ge2$. Fix $i\ne j$ and let
$T\subset\{1,\ldots,2m+1\}\setminus\{i,j\}$ have cardinality $m-1$. Comparing in \eqref{eq:msubsets} the sets $T\cup\{i\}$ and $T\cup\{j\}$ gives
\begin{equation}\label{eq:replacepaper}
 \sum_{\ell\in T}
 \bigl(\ip{u_\ell}{u_i}-\ip{u_\ell}{u_j}\bigr)=0.
\end{equation}
For fixed $i,j$ put
$x_\ell=\ip{u_\ell}{u_i}-\ip{u_\ell}{u_j}$ for $\ell\ne i,j$.
Every sum of $m-1$ of the $x_\ell$ is zero. Comparing two such subsets which differ only by replacing an index $r$ with an index $s$ shows that $x_r=x_s$. Hence all the $x_\ell$ are equal, and because $m-1>0$ their common value must be zero. Therefore
\begin{equation}\label{eq:triplepaper}
 \ip{u_\ell}{u_i}=\ip{u_\ell}{u_j}
 \qquad\text{whenever }i,j,\ell\text{ are distinct}.
\end{equation}
For each fixed $\ell$, all off-diagonal entries in the $\ell$th row of the Gram matrix are equal. Symmetry then shows that all off-diagonal entries have one common value, say $\beta$. Taking the inner product of \eqref{eq:centeredpaper} with $u_j$ gives
\[
 1+d\beta=0.
\]
Thus
\[
 \ip{u_i}{u_j}=-\frac1d\qquad(i\ne j),
\]
and the configuration is a centered regular $d$-simplex. Together with \eqref{eq:devenpaper}, this completes the equality classification when every $d$-subfamily is independent.

\section{The remaining equality cases}\label{sec:inductiveeq}

We now complete the classification by induction on $d$. We first record a property of the model configurations from \cref{sec:models}.

\begin{lemma}\label{lem:spanmaxpaper}
For a configuration consisting of a centered regular simplex of positive even dimension and an orthonormal basis of its orthogonal complement, the maximizing signed sums span the whole ambient space generated by the configuration.
\end{lemma}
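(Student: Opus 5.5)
The plan is to use the description of maximizing sign vectors from \cref{sec:models}. Write the configuration as $v_1,\ldots,v_{k+1}$ (centered regular simplex in $E$, $k=2m$ even, $k\ge2$) and $e_1,\ldots,e_{d-k}$ (orthonormal basis of $E^\perp$). A sign vector $(\varepsilon,\delta)$ gives the signed sum $S=\sum_i\varepsilon_iv_i+\sum_j\delta_je_j$. Its squared norm is the sum of the two block contributions, and the second block always contributes $d-k$. So $(\varepsilon,\delta)$ is maximizing if and only if $\left|\sum_i\varepsilon_i\right|=1$, with $\delta$ completely arbitrary. The ambient space generated by the configuration is $E\oplus E^\perp$. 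It therefore suffices to show that the linear span $V$ of all such maximizing sums contains $E^\perp$ and contains $E$.

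For $E^\perp$, fix any $\varepsilon$ with $\sum_i\varepsilon_i=1$ and compare the two maximizing sums whose sign vectors differ only in $\delta_j$. Their difference is $2e_j$, so every $e_j$ lies in $V$, and hence $E^\perp\subset V$. Subtracting the $E^\perp$-component then shows that $x_\varepsilon:=\sum_i\varepsilon_iv_i\in V$ for every $\varepsilon$ with $|\sum_i\varepsilon_i|=1$.

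For $E$, I would use the centering relation $\sum_i v_i=0$ as in \cref{sec:uniqueeq}. A sign vector with $\sum_i\varepsilon_i=1$ has a set $A$ of exactly $m$ negative signs, and then $x_\varepsilon=-2\sum_{i\in A}v_i$. So $V$ contains $w_A:=\sum_{i\in A}v_i$ for every $m$-subset $A$ of $\{1,\ldots,2m+1\}$.

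If $m=1$, the vectors $w_A$ are the single vertices $v_i$, which span $E$. If $m\ge2$, take an $(m-1)$-set $T$ and two indices $i,j\notin T$. Then $w_{T\cup\{i\}}-w_{T\cup\{j\}}=v_i-v_j\in V$. Summing $v_i-v_j$ over $j\ne i$ gives $(k+1)v_i-\sum_j v_j=(k+1)v_i$, so every $v_i\in V$. Since the vertices of a centered regular simplex span $E$, we get $E\subset V$, which together with $E^\perp\subset V$ proves the lemma.

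There is no real obstacle. The one point needing care is the degenerate bookkeeping: the case $m=1$, where no set $T$ is needed, and the case $k=d$, where the orthonormal block is empty. Both are covered by the argument above.
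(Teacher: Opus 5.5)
Your proof is correct and follows essentially the same route as the paper: flipping a single orthonormal sign gives $2e_j$, swapping the signs at $i$ and $j$ within the layer $\sum_i\varepsilon_i=1$ gives $2(v_i-v_j)$, and the centering relation recovers each $v_i$ from these differences. Your detour through the subset sums $w_A$ and your separate treatment of $m=1$ are cosmetic differences; taking $T=\emptyset$ would handle $m=1$ uniformly.
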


\begin{proof}
Let $v_1,\ldots,v_{k+1}$ be the simplex block, with $k$ even, and let $e_1,\ldots,e_r$ be the orthonormal block. By \eqref{eq:simplexformula}, a sign pattern is maximizing on the simplex precisely when the simplex signs sum to $\pm1$, while the orthonormal signs are arbitrary.

Fix a maximizing simplex pattern and all orthonormal signs except that of $e_j$. Flipping the sign of $e_j$ gives two maximizing total sums whose difference is $2e_j$. Thus every $e_j$ lies in the span of the maximizing sums.

For $i\ne j$, choose a simplex sign pattern with total sign sum $1$, with $\varepsilon_i=1$ and $\varepsilon_j=-1$. Such a pattern exists because the remaining $k-1$ signs can be chosen to have sum $1$. Interchanging the signs at $i$ and $j$ preserves the layer and hence maximality. The two corresponding signed sums differ by $2(v_i-v_j)$. Finally,
\[
 v_i=\frac1{k+1}\sum_{j=1}^{k+1}(v_i-v_j),
\]
because $\sum_{j=1}^{k+1}v_j=0$. Hence the differences $v_i-v_j$ span the simplex space. Together with the orthonormal block, the maximizing sums span the full space.
\end{proof}

We prove the necessity in \cref{thm:main}. The case $d=2$ follows from \cref{sec:uniqueeq}; indeed, if two of the three vectors were dependent, their signed sum could have norm $2$, and adding the third vector with favorable sign would give squared norm at least $5$, contradicting equality $M=4$.

Assume now that the classification is known in dimension $d-1$, and let $u_1,\ldots,u_{d+1}\in S^{d-1}$ satisfy $M=d+2$. If every $d$-subfamily is independent, \cref{sec:uniqueeq} gives a regular simplex and $d$ is even.

Otherwise, after reordering, $u_1,\ldots,u_d$ are linearly dependent. Let
\[
 E:=\Span\{u_1,\ldots,u_d\},
 \qquad
 L:=\max_{\varepsilon\in\signs^d}
 \left\|\sum_{i=1}^d\varepsilon_i u_i\right\|^2.
\]
Viewing $E$ isometrically inside $\mathbb R^{d-1}$ and using the already proved lower bound in dimension $d-1$ gives $L\ge d+1$. Let
$S=\sum_{i=1}^d\varepsilon_i u_i$ be any maximizing signed sum of the block. Then
\begin{align*}
 d+2=M
 &\ge \max_{\delta=\pm1}\norm{S+\delta u_{d+1}}^2\\
 &=L+1+2|\ip{S}{u_{d+1}}|\\
 &\ge d+2.
\end{align*}
Thus
\begin{equation}\label{eq:inductioneqpaper}
 L=d+1,
 \qquad
 \ip{S}{u_{d+1}}=0
 \quad\text{for every maximizing sum }S\text{ of the block}.
\end{equation}
The block is therefore an equality configuration for the $(d-1)$-dimensional problem, after the above isometric embedding. By the induction hypothesis it is a model configuration from \cref{sec:models}. Every such model in $\mathbb R^{d-1}$ spans the whole ambient space: its regular simplex spans its simplex subspace and the remaining vectors form an orthonormal basis of the orthogonal complement. Hence the embedded copy of $E$ must be all of $\mathbb R^{d-1}$, and in particular $\dim E=d-1$. By \cref{lem:spanmaxpaper}, the maximizing sums of the block span $E$. The second part of \eqref{eq:inductioneqpaper} therefore gives
\[
 u_{d+1}\perp E.
\]
Hence $u_{d+1}$ adds one orthogonal unit direction to the orthonormal block of the lower-dimensional equality configuration. This is exactly the family described in \cref{thm:main} and completes the proof.

\section*{Declaration on the use of generative AI}

During the preparation of this manuscript, the author used ChatGPT (OpenAI) for exploratory mathematical discussion, checking calculations and exposition, bibliographic assistance, and language editing. All mathematical statements, proofs, and references in the final manuscript were independently checked by the author, who takes full responsibility for its contents.

\bigskip
\noindent
Universidad Torcuato Di Tella, Departamento de Matem\'atica y Estad\'istica, and CONICET,\\
Av. Figueroa Alcorta 7350, C1428BCW Buenos Aires, Argentina.

\noindent
\textit{E-mail address:} \href{mailto:dpinasco@utdt.edu}{\texttt{dpinasco@utdt.edu}}

\end{document}